\documentclass[12pt]{amsart}

\setlength{\textwidth}{150mm}
\setlength{\textheight}{220mm}
\setlength{\oddsidemargin}{5.5mm}
\setlength{\evensidemargin}{5.5mm}

\usepackage{amssymb}
\usepackage{amscd}
\usepackage{statex}

\title[A birational embedding with two Galois points for certain curves]{A birational embedding with two Galois points for certain Artin--Schreier curves}
\author{Satoru Fukasawa \and Kazuki Higashine}

\subjclass[2010]{14H50, 14H37}
\keywords{Galois point, Galois group, plane curve, automorphism group}
\thanks{The first author was partially supported by JSPS KAKENHI Grant Number 16K05088}
\address[Satoru Fukasawa]{Department of Mathematical Sciences, Faculty of Science, Yamagata University, Kojirakawa-machi 1-4-12, Yamagata 990-8560, Japan}
\email{s.fukasawa@sci.kj.yamagata-u.ac.jp}
\address[Kazuki Higashine]{Graduate School of Science and Engineering, Yamagata University, Kojirakawa-machi 1-4-12, Yamagata 990-8560, Japan}
\email{s16m111m@st.yamagata-u.ac.jp}

\newtheorem{thm}{Theorem}
\newtheorem{prop}{Proposition}

\newtheorem{lem}{Lemma}

\theoremstyle{definition}
\newtheorem{rem}{Remark}

\newcommand{\pline}[2]{{\overline{#1#2}}}

\newcommand{\map}[3]{{#1: #2\to #3}}

\newcommand{\ratmapele}[5]{{#1: #2\dashrightarrow #3\ ;\ #4 \mapsto #5}}
\newcommand{\image}[2]{{#1 (#2)}}

\newcommand{\dval}[2]{{{\rm ord}_{#1}\left( #2\right)}}

\newcommand{\proj}[1]{{\pi_{#1}}}
\newcommand{\rproj}[1]{{{\hat{\pi}}_{#1}}}
\newcommand{\pproj}[1]{{\pi_{#1}^{\ast}}}

\begin{document}
\begin{abstract}
We show that two curves of Artin--Schreier type have a birational embedding into a projective plane with two Galois points. As a consequence, all curves with large automorphism groups in the classification list by Henn have a birational embedding with two Galois points.  
\end{abstract}
\maketitle

\section{Introduction}
Let $K$ be an algebraically closed field of characteristic $p>0$, and let $C\subset \mathbb{P}^{2}$ be an irreducible plane curve of degree $d\geq 3$ over $K$. Take a point $P\in \mathbb{P}^{2}$. We consider the projection
\[ \ratmapele{\proj{P}}{C}{\mathbb{P}^{1}}{Q}{\pline{P}{Q}}\]
with the center $P$, where $\pline{P}{Q}$ represents the line passing through $P$ and $Q$ if $P\not= Q$. Since $\proj{P}$ is a dominant rational map, $\proj{P}$ induces a field extension $K(C)/\pproj{P}K(\mathbb{P}^{1})$ of function fields. If the extension $K(C)/\pproj{P}K(\mathbb{P}^{1})$ is Galois, then $P$ is called a Galois point for $C$. This notion was introduced by H. Yoshihara (\cite{miura-yoshihara, yoshihara}). Furthermore, a Galois point $P$ is said to be inner (resp. outer) if $P$ is a smooth point of $C$ (resp. a point not contained in $C$).

Before the paper \cite{birationalembedding} appeared, there were only seven types of examples of plane curves with two inner Galois points. In \cite{birationalembedding}, a criterion for the existence of a birational embedding of smooth projective curves with two Galois points was presented. Using this criterion, the first author and Waki have obtained (at least) eight new examples of plane curves (whose smooth models are rational, elliptic, Hermitian, Suzuki or Ree curves) with two inner Galois points (see the Table and references in \cite{listofproblem}). In this article, we present further new examples of plane curves with two Galois points.

 Let $(X:Y:Z)$ be a  system of homogeneous coordinates of $\mathbb{P}^{2}$. An affine open set defined by $Z\not= 0$ is denoted by $U_{Z}$, and $(x, y)=(X/Z, Y/Z)$ is a system of affine coordinates of $U_{Z}$. Put $q=p^{n}$, where $n$ is a positive integer. We consider the following two curves:
 \begin{itemize}
\item[(1)] The curve $\mathcal{F}_{m}\subset \mathbb{P}^{2}$ is (the projective closure of the affine curve) defined by
\[ y^{m}=x^{q}+x,\]
where $m$ is an integer dividing $q+1$ with $2\leq m< q$.
\item[(2)] The curve $\mathcal{G}_{r}\subset \mathbb{P}^{2}$ is (the projective closure of the affine curve) defined by
\[ y^{q^{r}+1}=x^{q}+x,\]
where $r$ is an integer with $r\geq 2$.
\end{itemize}

\begin{rem}\label{remark1}
The curve $\mathcal{F}_{2}$ is the same as the curve in (I\hspace{-.1em}I) of \cite[Theorem 11.127]{hkt}. Similarly, the curve $\mathcal{G}_{r}$ with $q=2$ is the same as the curve in (I) of \cite[Theorem 11.127]{hkt}.
\end{rem}

The following two theorems are our main results.

\begin{thm}\label{theorem1}
Let $\hat{\mathcal{F}}_{m}$ be the smooth model of $\mathcal{F}_{m}$. Then the following hold.
\begin{itemize}
\item[(1)] There exists a morphism $\map{\varphi}{\hat{\mathcal{F}}_{m}}{\mathbb{P}^{2}}$ such that $\varphi$ is birational onto its image, ${\rm deg}(\image{\varphi}{\hat{\mathcal{F}}_{m}})=q+1$, and $\image{\varphi}{\hat{\mathcal{F}}_{m}}$ has two distinct inner Galois points.
\item[(2)] There exists a morphism $\map{\psi}{\hat{\mathcal{F}}_{m}}{\mathbb{P}^{2}}$ such that $\psi$ is birational onto its image, ${\rm deg}(\image{\psi}{\hat{\mathcal{F}}_{m}})=q+1$, and $\image{\psi}{\hat{\mathcal{F}}_{m}}$ has two distinct outer Galois points.
\end{itemize}
\end{thm}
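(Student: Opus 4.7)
My plan is to apply the criterion of \cite{birationalembedding} for birational plane embeddings with two Galois points: one must produce two distinct points $P_{1}, P_{2}\in \hat{\mathcal{F}}_{m}$ together with subgroups $G_{1}, G_{2}\subset \mathrm{Aut}(\hat{\mathcal{F}}_{m})$ of the appropriate order (namely $q$ for two inner Galois points, $q+1$ for two outer Galois points), subject to a trivial-stabilizer hypothesis on the fixed-point data and a divisorial linear equivalence between the relevant orbits. The common tool for both parts is the involution
\[
\phi \colon (x, y) \longmapsto \Bigl(\,1/x,\ y/x^{(q+1)/m}\,\Bigr),
\]
which is a well-defined automorphism of $\hat{\mathcal{F}}_{m}$ precisely because $m \mid q+1$ --- one verifies $(y/x^{(q+1)/m})^{m} = (x^{q}+x)/x^{q+1} = 1/x^{q} + 1/x$ --- and which interchanges $P_{\infty}$ with $(0, 0)$.

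For part~(1), I take $P_{1} := P_{\infty}$ and $P_{2} := (0, 0)$, and let $G_{1} := \{\sigma_{a} : (x, y) \mapsto (x + a, y) \mid a^{q} + a = 0\}$ be the Artin--Schreier translation group of order $q$ fixing $P_{1}$; this is the Galois group of the degree-$q$ projection $y \colon \hat{\mathcal{F}}_{m} \to \mathbb{P}^{1}$. Setting $G_{2} := \phi G_{1} \phi^{-1}$ gives the analogous subgroup of order $q$ fixing $P_{2}$, with associated projection $y/x^{(q+1)/m}$. The trivial-stabilizer condition is immediate because the orbit $G_{1}\cdot P_{2} = \{(a, 0) : a^{q}+a=0\}$ consists of $q$ distinct points; the required divisor equivalence follows from the principal divisor $\mathrm{div}(y^{m}) = m\sum_{a^{q}+a=0}(a, 0) - qm\cdot P_{\infty}$ together with its $\phi$-conjugate. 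Concretely, one can verify that the map
\[
\varphi(x, y) = \bigl(1 : x^{(q+1)/m} : y\bigr)
\]
realizes the embedding: its pole divisor at $P_{\infty}$ has degree $q+1$; birationality follows by analyzing the separable quadratic $T^{2} - y^{m} T + x^{q+1} = 0$ satisfied by $x$ over $K(y, x^{(q+1)/m})$ --- its two roots $x$ and $x^{q}$ cannot be exchanged by any $K$-automorphism fixing $x^{(q+1)/m}$, since such an exchange would force $x^{(q+1)(q-1)/m} = 1$; and the two images $\varphi(P_{\infty}) = (0{:}1{:}0)$, $\varphi(0, 0) = (1{:}0{:}0)$ are the desired inner Galois points, with projections $y$ and $y/x^{(q+1)/m}$.

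For part~(2), the construction is parallel but uses subgroups $H_{1}, H_{2}$ of order $q+1$. The main technical obstacle is producing $H_{1}$, because the ``obvious'' affine automorphism group (torus, translations, and $\phi$) does not contain a subgroup of order $q+1$ in general. The key observation is that the branch locus of the degree-$m$ projection $x\colon \hat{\mathcal{F}}_{m} \to \mathbb{P}^{1}$ is the configuration $\{a : a^{q} + a = 0\} \cup \{\infty\}$ of $q+1$ points, a twisted form of $\mathbb{P}^{1}(\mathbb{F}_{q}) \subset \mathbb{P}^{1}_{K}$ whose stabilizer in $\mathrm{PGL}(2, K)$ is isomorphic to $\mathrm{PGL}(2, \mathbb{F}_{q})$; this latter group contains a cyclic non-split (Singer) torus of order $q+1$. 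For any M\"obius $\mu$ in the stabilizer, an explicit Frobenius computation shows that the ratio $f(\mu(x))/f(x)$ with $f(x) = x^{q} + x$ is an $m$-th power in $K(x)$, so $\mu$ lifts through the Kummer cover $y^{m} = f(x)$ to yield a cyclic subgroup $H_{1} \subset \mathrm{Aut}(\hat{\mathcal{F}}_{m})$ of order $q+1$. Setting $H_{2} := \phi H_{1} \phi^{-1}$ and verifying the criterion's stabilizer and divisor conditions --- the most delicate step, requiring a careful principal-divisor computation for the $H_{i}$-orbits --- completes the proof. I expect this final verification, together with the identification of the correct Singer lift, to be the hardest part of the argument.
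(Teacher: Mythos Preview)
Your treatment of part~(1) coincides with the paper's: the same Artin--Schreier translation group $G_{1}$, the same involution $(x,y)\mapsto(1/x,\,y/x^{s})$ (called $\alpha$ in the paper), and $G_{2}=\alpha^{-1}G_{1}\alpha$; the embedding the paper records as $\varphi=(1/y:x^{s}/y:1)$ is your $(1:x^{s}:y)$.

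For part~(2) the routes diverge. The paper does not stay on $\mathcal{F}_{m}$: it first proves a lemma that $\mathcal{F}_{m}$ is birational to $\mathcal{E}_{m}\colon y^{m}=x^{q+1}-1$, on which the desired cyclic group of order $q+1$ is simply the diagonal action $(x,y)\mapsto(\zeta x,y)$ with $\zeta^{q+1}=1$. It then conjugates this $G_{1}$ by an explicit automorphism $\beta$ of $\mathcal{E}_{m}$ (acting on the $x$-line as the parabolic M\"obius map $x\mapsto \tfrac{x+\lambda(x-1)}{1+\lambda(x-1)}$ with $\lambda^{q}+\lambda=0$), and the checks $G_{1}\cap G_{2}=\{1\}$ and the common-orbit condition on $X=\{(\zeta:0:1):\zeta^{q+1}=1\}$ reduce to short direct computations. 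Your Singer-lift strategy on $\mathcal{F}_{m}$ is conceptually sound, but the model change to $\mathcal{E}_{m}$ buys precisely the steps you flag as hardest. First, your observation that $f(\mu(x))/f(x)$ is an $m$-th power shows each $\mu$ lifts, but since $m\mid q+1$ every choice of $m$-th root gives a lift with the \emph{same} $(q{+}1)$-st power, so this alone does not show that the central extension $1\to\mu_{m}\to\tilde S\to S\to1$ splits; on $\mathcal{E}_{m}$ the splitting is visible by inspection. Second, you must choose the Singer cycle so that your conjugator $\phi$ does not normalize it (equivalently, its fixed-point pair $\{\alpha,\alpha^{q}\}$ satisfies $\alpha^{q+1}\neq 1$), whereas the paper's explicit $\beta$ sidesteps any such choice.
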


\begin{thm}\label{theorem2}
Let $\hat{\mathcal{G}}_{r}$ be the smooth model of $\mathcal{G}_{r}$. Then there exists a morphism $\map{\xi}{\hat{\mathcal{G}}_{r}}{\mathbb{P}^{2}}$ such that $\xi$ is birational onto its image, ${\rm deg}(\image{\xi}{\hat{\mathcal{G}}_{r}})=q^{r}+1$, and  $\image{\xi}{\hat{\mathcal{G}}_{r}}$ has two distinct outer Galois points.
\end{thm}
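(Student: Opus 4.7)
My plan is to invoke the birational embedding criterion of \cite{birationalembedding}: from two subgroups $G_1, G_2 \subset \mathrm{Aut}(\hat{C})$ of order $d$ satisfying suitable intersection and ramification hypotheses, the criterion produces a morphism $\hat{C} \to \mathbb{P}^{2}$ birational onto a degree-$d$ plane curve with two outer Galois points, the Galois group at the $i$-th being $G_i$. I take $d = q^r+1$, and the first subgroup is the obvious Kummer group $G_1 := \{(x,y) \mapsto (x, \zeta y) \mid \zeta^{q^r+1} = 1\}$, cyclic of order $q^r+1$ with fixed field $K(x)$; its fixed locus on $\hat{\mathcal{G}}_r$ consists of the unique place $P_{\infty}$ over $x = \infty$ together with the $q$ places over $y = 0$.

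The heart of the proof is producing a second subgroup $G_2$ of order $q^r+1$ distinct from $G_1$. Within the naive affine automorphism group $\{(x,y) \mapsto (\beta^{q^r+1} x + \gamma, \beta y) : \gamma^q + \gamma = 0,\ \beta^{(q-1)(q^r+1)} = 1\}$ (of order $q(q-1)(q^r+1)$), $G_1$ is the unique subgroup of order $q^r+1$: since $\gcd(p, q^r+1) = 1$, any such subgroup injects into the cyclic quotient $\mu_{(q-1)(q^r+1)}$, in which $\mu_{q^r+1}$ is the unique cyclic subgroup of that order. Hence $G_2$ must be built from automorphisms of $\hat{\mathcal{G}}_r$ lying outside this affine subgroup; their existence is guaranteed by Remark~\ref{remark1} and \cite[Theorem~11.127]{hkt}, which place $\mathcal{G}_r$ in Henn's list of curves with very large automorphism group. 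Concretely, I would exhibit an explicit automorphism $\tau \in \mathrm{Aut}(\hat{\mathcal{G}}_r)$ that does not preserve the $(q+1)$-element fixed locus of $G_1$, and set $G_2 := \tau G_1 \tau^{-1}$. Then $G_2$ is cyclic of order $q^r+1$ with fixed locus $\tau$ of the fixed locus of $G_1$, distinct from that of $G_1$, and a comparison of fixed loci gives $G_1 \cap G_2 = \{1\}$ (every nontrivial element of $G_1$ shares the same fixed locus, so a common element would force the two loci to coincide).

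Finally, I would verify the remaining hypotheses of the criterion of \cite{birationalembedding}: that $K(\hat{\mathcal{G}}_r)^{G_1} \cdot K(\hat{\mathcal{G}}_r)^{G_2} = K(\hat{\mathcal{G}}_r)$ (so that the induced map $\hat{\mathcal{G}}_r \to \mathbb{P}^1 \times \mathbb{P}^1$ is birational onto its image), and that the two base points of the resulting linear system fall outside the image curve, making the Galois points \emph{outer}. The morphism $\xi$ then arises from the associated three-dimensional linear system in $\mathbb{P}^2$. The principal obstacle is the explicit construction of the automorphism $\tau$, equivalently of $G_2$: it forces one to engage with the full, non-obvious automorphism group of $\hat{\mathcal{G}}_r$, well beyond the elementary affine subgroup. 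Once $\tau$ is in hand, the remaining verification is a careful but largely routine computation.
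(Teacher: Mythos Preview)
Your strategy is exactly the paper's: take $G_{1}=\{(x,y)\mapsto(x,\zeta y):\zeta^{q^{r}+1}=1\}$, set $G_{2}=\tau G_{1}\tau^{-1}$ for a suitable $\tau\in\mathrm{Aut}(\hat{\mathcal{G}}_{r})$, and apply the criterion of \cite{birationalembedding}. Your observation that the affine subgroup $\{(x,y)\mapsto(\beta^{q^{r}+1}x+\gamma,\beta y)\}$ contains no second copy of $\mu_{q^{r}+1}$ is correct and is exactly why a nontrivial construction is needed. But the proposal is not a proof: you yourself name ``the explicit construction of the automorphism $\tau$'' as the principal obstacle and then do not carry it out. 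Invoking Henn's list only tells you $|\mathrm{Aut}(\hat{\mathcal{G}}_{r})|$ is large; it does not hand you a $\tau$ with the required properties, and that construction is the entire substance of the argument.

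The paper supplies $\tau$ explicitly. Choose $b,c\in K\setminus\{0\}$ with $b=(-1)^{r-1}b^{q^{2r}}$ and $c^{q}+c=b^{q^{r}+1}$, and set $g_{b}(y)=\sum_{i=0}^{r-1}(-1)^{i}b^{q^{i+r}}y^{q^{i}}$. This additive polynomial satisfies $(g_{b}(y))^{q}+g_{b}(y)=by^{q^{r}}+b^{q^{r}}y$, from which one checks directly that
\[
\gamma:(x,y)\longmapsto\bigl(x+g_{b}(y)+c,\ y+b\bigr)
\]
is an automorphism of $\mathcal{G}_{r}$. With $G_{2}=\gamma G_{1}\gamma^{-1}$, triviality of $G_{1}\cap G_{2}$ is shown by noting that $\gamma(R)$, for $R=(\alpha,0)$ with $\alpha^{q}+\alpha=0$, is fixed by all of $G_{2}$ but moved by every nontrivial element of $G_{1}$. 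Crucially, $\gamma$ fixes $Q_{\infty}$, so both groups fix $Q_{\infty}$ and one has the divisor equality $\sum_{\sigma\in G_{1}}\sigma(Q_{\infty})=(q^{r}+1)Q_{\infty}=\sum_{\tau\in G_{2}}\tau(Q_{\infty})$; this is the hypothesis in \cite[Remark~1(2)]{birationalembedding} that yields \emph{outer} Galois points. Your sketch of the final verification (compositum of fixed fields, base points off the curve) is not how the paper phrases the criterion, and in particular you would still need to produce a common $G_{1}$- and $G_{2}$-orbit; the choice of $\gamma$ fixing $Q_{\infty}$ is what makes this immediate.
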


For algebraic curves $C$ with genus $g\geq 2$, there exist only four types of curves such that $| {\rm Aut}_{K}(K(C)) | \geq 8g^{3}$ by Henn (\cite[Theorem 11.127]{hkt}). Combining with results of Homma \cite{homma} and of the first author \cite{hermitiananddlscurve} for the Hermitian and Suzuki curves, we infer that all curves in the list \cite[Theorem 11.127]{hkt} have a birational embedding with two Galois points.

\section{Proof of main theorems}
For the proof, we use \cite[Theorem 1 and Remark 1(2)]{birationalembedding}. Note that $\mathcal{F}_{m}$ (resp. $\mathcal{G}_{r}$) has a unique singular point $(0:1:0)$ (resp. $(1:0:0)$), if $(q, m)\not= (3, 2)$. The fiber of the normalization over $(0:1:0)$ (resp. over $(1:0:0)$) consists of a unique point $P_{\infty}\in \hat{\mathcal{F}}_{m}$ (resp. $Q_{\infty} \in \hat{\mathcal{G}}_{r}$). $\rproj{P}$ denotes the composition of $\proj{P}$ and the normalization. The ramification index of $\rproj{P}$ at a ramification point $Q$ over a branch point $R$ is represented by $e(Q|R)$.

\begin{proof}[\bf{Proof of Theorem 1(1)}]
We consider the subgroup
\[ G_{1} : =\{ (X:Y:Z)\mapsto (X+\lambda :Y:Z)\mid \lambda \in K, \lambda^{q}+\lambda =0\} \subset {\rm PGL}(3, K).\]
For any $\sigma \in G_{1}$, $\sigma$ induces an automorphism of $\hat{\mathcal{F}}_{m}$. Put $s=(q+1)/m$. We consider the rational map
\[ \alpha : \mathcal{F}_{m}\dashrightarrow \mathbb{P}^{2}\ ;\ \left( \frac{1}{x}:\frac{y}{x^{s}}:1\right) .\]
Then $\alpha$ is a birational map of $\mathcal{F}_{m}$. In fact,
\[ \left( \frac{1}{x}\right)^{q}+\left( \frac{1}{x}\right) =\frac{x^{q}+x}{x^{q+1}}=\frac{y^{m}}{x^{sm}}.\]
Put $G_{2}=\alpha^{-1}G_{1}\alpha$. We define the set
\[ W : =\{ P_{\infty}\} \cup \{ P_{\lambda}=(\lambda :0:1)\mid \lambda \in K, \lambda^{q}+\lambda =0 \} .\]
It follows that $\sigma (W\setminus \{ P_{\infty}\})=W\setminus \{ P_{\infty}\}$ and $\sigma (P_{\infty})=P_{\infty}$, for any $\sigma \in G_{1}$. On the other hand, $\tau (W\setminus \{ P_{0}\})=W\setminus \{ P_{0}\}$ and $\tau (P_{0})=P_{0}$ hold for all $\tau \in G_{2}$. In fact, the automorphism $\alpha$ interchanges $P_{0}$ and $P_{\infty}$, and preserves the set $W\setminus \{ P_{0}, P_{\infty}\}$. Since the fixed point by $G_{1}$ is different from that by $G_{2}$, the group $G_{1}\cap G_{2}$ is trivial. Furthermore, it follows that $K(\hat{\mathcal{F}}_{m})^{G_{1}}=K(1/y)\simeq K(\mathbb{P}^{1})$ and $K(\hat{\mathcal{F}}_{m})^{G_{2}}=K(x^{s}/y) \simeq K(\mathbb{P}^{1})$, and the equality
\[ \{ P_{\infty}\} \cup \{ \sigma (P_{0})\mid \sigma \in G_{1}\} =W=\{ P_{0}\} \cup \{ \tau (P_{\infty})\mid \tau \in G_{2}\}\]
holds. Applying \cite[Theorem 1]{birationalembedding}, we have assertion (1).
\end{proof}

We can determine the number of inner Galois points on $\image{\varphi}{\hat{\mathcal{F}}_{m}}$. 

\begin{prop}
If $(q, m)\not= (3, 2)$, then there exist at most two inner Galois points on $\image{\varphi}{\hat{\mathcal{F}}_{m}}$.
\end{prop}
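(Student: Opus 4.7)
The plan is to encode each inner Galois point of $\image{\varphi}{\hat{\mathcal{F}}_{m}}$ as a pair $(H, \tilde{P})$ with $H \leq \text{Aut}(\hat{\mathcal{F}}_{m})$ of order $q$ and $\tilde{P}$ a totally ramified fixed point of $H$, and then to enumerate such pairs. Given an inner Galois point $P$ of the degree-$(q+1)$ image curve, composing with $\varphi$ presents the projection $\pi_{P}$ as a Galois cover $\hat{\mathcal{F}}_{m}\to \mathbb{P}^{1}$ of degree $q$; its Galois group $H$ is a $p$-subgroup of $\text{Aut}(\hat{\mathcal{F}}_{m})$ of order $q=p^{n}$, and by the standard theory of inner Galois points in characteristic $p$ with $d-1$ a $p$-power, the preimage $\tilde{P}:=\varphi^{-1}(P)$ is fixed by $H$ with ramification index $q$ (equivalently, the tangent to the image at $P$ meets it only at $P$, with multiplicity $q+1$).

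The central step is to classify such pairs $(H, \tilde{P})$. For $(q, m)\neq (3, 2)$ the genus $(m-1)(q-1)/2$ is at least $2$, so $\text{Aut}(\hat{\mathcal{F}}_{m})$ is finite. The curve $\hat{\mathcal{F}}_{m}$ is a quotient of the Hermitian curve $H_{q}:y^{q+1}=x^{q}+x$ by the cyclic group $\langle (x, y)\mapsto (x, \zeta y)\rangle$ of order $s=(q+1)/m$, so $\text{Aut}(\hat{\mathcal{F}}_{m})$ is controlled by the normalizer of that cyclic group inside $\text{PGU}(3, q)=\text{Aut}(H_{q})$. Using this, one argues that any subgroup of order $q$ with a totally ramified fixed point on $\hat{\mathcal{F}}_{m}$ is conjugate to the elementary-abelian translation group $G_{1}$, and that its fixed point lies in a small explicit orbit containing $P_{\infty}$ and $P_{0}$.

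To finish, for each candidate $(H, \tilde{P})$ one checks whether $K(\hat{\mathcal{F}}_{m})^{H}$ equals the pullback via $\varphi$ of the pencil of lines through some point of $\image{\varphi}{\hat{\mathcal{F}}_{m}}$. The pairs $(G_{1}, P_{\infty})$ and $(G_{2}, P_{0})$ satisfy this by the proof of Theorem \ref{theorem1}(1), and a direct computation using $\alpha$ and the explicit form of $\varphi$ rules out any third pair. The main obstacle is the classification step above---ruling out exotic order-$q$ subgroups of $\text{Aut}(\hat{\mathcal{F}}_{m})$ with a totally ramified fixed point---which requires careful use of the Hermitian-curve structure of $\hat{\mathcal{F}}_{m}$ and of the normalizer calculation in $\text{PGU}(3, q)$; the final matching step is then a routine coordinate check.
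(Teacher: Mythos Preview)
Your plan has a genuine gap at its first step. You assert that for an inner Galois point $P=\varphi(R)$ with Galois group $H$ of order $q$, the preimage $R=\varphi^{-1}(P)$ is automatically fixed by $H$ and totally ramified, citing ``standard theory.'' This is not immediate: elements of $H$ are automorphisms of $\hat{\mathcal{F}}_m$, not projective transformations of the plane, and there is no a priori reason they fix $R$ (equivalently, nothing yet forces the tangent to $\varphi(\hat{\mathcal{F}}_m)$ at $\varphi(R)$ to have contact order $q+1$). The paper does \emph{not} assume this; it argues indirectly. From $|{\rm Aut}(\hat{\mathcal{F}}_m)|=m(q-1)q(q+1)$ one sees that $H$ is a Sylow $p$-subgroup, hence fixes some $P\in W$ (the set of points whose Weierstrass semigroup is $\langle m,q\rangle$), and $P$ is totally ramified for $\hat\pi_{\varphi(R)}$. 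The paper then proves $\varphi(R)\in\{Z=0\}$ by a geometric argument: if not, the line $\overline{\varphi(R)\varphi(P_\infty)}$ is not the tangent at $\varphi(P_\infty)$, so it meets the image at some $\varphi(P')$ with $P'\notin W$; but $H$ acts transitively on that fiber, forcing an automorphism to carry $P_\infty\in W$ to $P'\notin W$, contradicting the ${\rm Aut}$-invariance of Weierstrass points. Only after this does one conclude $R=P\in W$.

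Beyond this, your classification step is heavier than needed. The paper never passes through the Hermitian quotient or normalizer computations in ${\rm PGU}(3,q)$; it uses the known order of ${\rm Aut}(\hat{\mathcal{F}}_m)$ together with Sylow conjugacy and the Weierstrass characterization of $W$. Note also that the ``small explicit orbit'' you invoke is all of $W$, which has $q+1$ points, not just $\{P_0,P_\infty\}$; so your final ``routine coordinate check'' is where the real discrimination happens. The paper carries this out by computing, for each $P_\lambda$ with $\lambda^{q-1}+1=0$, the ramification index of $\hat\pi_{\varphi(P_\lambda)}$ at $P_\lambda$: it equals $m-1<q$, so $P_\lambda$ is not totally ramified and hence $\varphi(P_\lambda)$ cannot be inner Galois. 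That explicit ramification computation, together with the Weierstrass-point argument above, is the substance of the proof.
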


\begin{proof}
As in the proof of \cite[Theorem 1]{birationalembedding}, the birational map given above is represented by
\[ \varphi =\left( \frac{1}{y}:\frac{x^{s}}{y}:1\right) .\]
We consider the image $\image{\varphi}{W}$. It can be easily checked that $\image{\varphi}{W}=\image{\varphi}{\hat{\mathcal{F}}_{m}}\cap \{ Z=0\}$. More precisely, $\varphi (P_{\infty})=(0:1:0)$ and $\varphi (P_{\lambda})=(1:\lambda^{s}:0)$ for all $P_{\lambda}\in W \setminus \{ P_{\infty}\}$.

Let $\varphi (R)$ be an inner Galois point. Put $H={\rm Gal}(K(\mathcal{F}_{m})/\pproj{\varphi (R)}(K(\mathbb{P}^{1})))$. Since $\varphi (R)$ is inner Galois, ${\rm deg}(\proj{\varphi (R)})= | H | =q$. Since $| {\rm Aut}(\hat{\mathcal{F}}_{m}) | =m(q-1)q(q+1)$ (see \cite[Theorem 12.11]{hkt}, \cite{stichtenoth1973}), $H$ is a Sylow $p$-subgroup of ${\rm Aut}(\hat{\mathcal{F}}_{m})$. Thus, there exists a point $P\in W$ such that $\eta (P)=P$ for any $\eta \in H$. Using \cite[Corollary 3.8.2]{stichtenoth}, we know that $\rproj{\varphi (R)}$ is totally ramified at $P$.

We show that $\varphi (R)\in \{ Z=0\}$. Assume by contradiction that $\varphi (R)\not\in \{ Z=0\}$. The projection with the center $\varphi (P_{\infty})=(0:1:0)$ is totally ramified at $P_{\infty}$. In fact, $e(P_{\infty}|\rproj{\varphi (P_{\infty})}(P_{\infty}))=\dval{P_{\infty}}{1/y}=q$. Note that the intersection multiplicity of $\image{\varphi}{\hat{\mathcal{F}}_{m}}$ and the tangent line at $\varphi (P_{\infty})$ is equal to $q+1$. Since $\pline{\varphi (R)}{\varphi (P_{\infty})}$ is not a tangent line at $\varphi (P_{\infty})$, there exists a point $P'\not\in W$ such that $\varphi (P')\in \pline{\varphi (R)}{\varphi (P_{\infty})}$. On the other hand, $H$ acts on $\rproj{\varphi (R)}^{-1}(\pline{\varphi (R)}{\varphi (P_{\infty})})$ transitively by \cite[Theorem 3.7.1]{stichtenoth}. Thus, there exists $\eta '\in H$ which satisfies $\eta '(P_{\infty})=P'$. Note that $W$ is the set of all points of $\hat{\mathcal{F}}_{m}$ such that the Weierstrass semigroup at the point generated by $m$ and $q$ (see \cite[Theorem 12.10]{hkt}). The invariance of Weierstrass points under the action of ${\rm Aut}(\hat{\mathcal{F}}_{m})$ gives a contradiction. Therefore, $\varphi (R)\in \{ Z=0\}$.

We show that $R=P$. If $\varphi (R)\not= \varphi (P)$, then the fiber of $\rproj{\varphi (R)}$ over $\pline{\varphi (R)}{\varphi (P)}\in \mathbb{P}^{1}$ coincides with $W\setminus \{ R\}$. However, $\rproj{\varphi (R)}$ is totally ramified at $P$. This is a contradiction. Thus, $R=P$. 

To conclude $R=P_{0}$ or $P_{\infty}$, we consider the projection with the center $\varphi (Q)$, where $\varphi (Q)\in \varphi (W)\setminus \{ \varphi (P_{0}), \varphi (P_{\infty})\}$. Let $Q=(\lambda :0:1)$ with $\lambda^{q-1}+1=0$. The morphism $\rproj{\varphi (Q)}$ is represented by
\[ \rproj{\varphi (Q)}=\left( \frac{x^{s}-\lambda^{s}}{y}:1\right) .\]
Since $\dval{Q}{x^{s}-\lambda^{s}}=m$, we have
\[ e(Q|\rproj{\varphi (Q)}(Q))=\dval{Q}{\frac{x^{s}-\lambda^{s}}{y}}=m-1<q.\]
Therefore, we have $\varphi (R)=\varphi (P_{0})$ or $\varphi (P_{\infty})$, and get the assertion.
\end{proof}

Before the proof of assertion (2), we show the following lemma.

\begin{lem}\label{lemma1}
Under the same assumption for the curve $\mathcal{F}_{m}$, we consider the curve $\mathcal{E}_{m}: y^{m}-x^{q+1}+1=0$. Then $\mathcal{F}_{m}$ and $\mathcal{E}_{m}$ are birational to each other.
\end{lem}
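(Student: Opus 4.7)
The plan is to exhibit an explicit birational map via a M\"obius transformation in $x$ combined with a rescaling of $y$. Specifically, I will try the ansatz
\[ X = \frac{x-\alpha}{x-\beta}, \qquad Y = \frac{y}{(x-\beta)^{s}}, \]
where $s = (q+1)/m$, and choose $\alpha, \beta \in K$ so that the key identity
\[ (x-\alpha)^{q+1} - (x-\beta)^{q+1} = x^{q} + x \]
is forced to hold. Once this is arranged, dividing by $(x-\beta)^{q+1}$ gives $X^{q+1} - 1 = (x^{q}+x)/(x-\beta)^{q+1}$, and since $sm = q+1$ we have $Y^{m} = y^{m}/(x-\beta)^{q+1}$. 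Substituting $y^{m} = x^{q}+x$ then yields $Y^{m} = X^{q+1} - 1$, which is the defining equation of $\mathcal{E}_{m}$.

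To force the above identity, I would expand $(x-c)^{q+1} = x^{q+1} - cx^{q} - c^{q}x + c^{q+1}$, valid in any characteristic because $(x-c)^{q} = x^{q} - c^{q}$. Subtracting, the difference becomes
\[ (\beta-\alpha)x^{q} + (\beta-\alpha)^{q} x + (\alpha^{q+1} - \beta^{q+1}). \]
I then impose $\beta - \alpha = 1$, which makes the coefficients of $x^{q}$ and $x$ both equal to $1$, and next require $\alpha^{q+1} = \beta^{q+1}$. Writing $\beta = \alpha + 1$ and expanding $(\alpha+1)^{q+1} = (\alpha+1)(\alpha^{q}+1)$, a direct computation reduces this last requirement to the Artin--Schreier equation $\alpha^{q} + \alpha + 1 = 0$, which has a root in the algebraically closed field $K$. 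Fixing such an $\alpha$ and setting $\beta = \alpha + 1$ supplies the desired constants.

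Birationality is then checked by producing the explicit inverse: solving $X = (x-\alpha)/(x-\beta)$ for $x$ gives $x = (\beta X - \alpha)/(X - 1)$, after which $y = Y(x-\beta)^{s}$. Both directions are rational maps on dense open subsets and compose to the identity. The only non-routine step in the whole argument is recognizing that the Artin--Schreier condition $\alpha^{q} + \alpha + 1 = 0$ is exactly what collapses the difference of two $(q+1)$-st powers onto $x^{q}+x$; once this is spotted, everything else is mechanical verification.
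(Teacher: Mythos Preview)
Your argument is correct, and in fact it produces exactly the same birational map as the paper, just packaged differently. The paper factors the construction as translation $(x,y)\mapsto(x-a,y)$ with $a^{q}+a=1$, followed by the inversion $(x,y)\mapsto(1/x,\,y/x^{s})$, followed by the translation $(x,y)\mapsto(x+1,y)$; composing these three steps gives precisely your M\"obius-plus-rescaling map $X=(x-\alpha)/(x-\beta)$, $Y=y/(x-\beta)^{s}$ with $\alpha=a-1$ and $\beta=a$, and one checks $\beta-\alpha=1$, $\alpha^{q}+\alpha+1=0$ as you required.
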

\begin{proof}
First, we take $a\in K$ which satisfies $a^{q}+a-1=0$, and a coordinate change $(x, y)\mapsto (x-a, y)$. Then $\mathcal{F}_{m}$ is projectively equivalent to the curve $\={\mathcal{F}}_{m}$ defined by $y^{m}-x^{q}-x-1=0$. Next, we consider the rational map
\[ \={\mathcal{F}}_{m}\to \mathbb{P}^{2}\ ; \left( \frac{1}{x}: \frac{y}{x^{s}}: 1\right) ,\]
where $s=(q+1)/m$. Then, it follows that
\[ \left( \frac{1}{x}\right)^{q+1}+\left( \frac{1}{x}\right)^{q}+\left( \frac{1}{x}\right) =\frac{x^{q}+x+1}{x^{q+1}}=\left( \frac{y}{x^{s}}\right)^{m}.\]
Therefore, $\={\mathcal{F}}_{m}$ and the curve $y^{m}-x^{q+1}-x^{q}-x=0$ are birational to each other. Finally, we take a coordinate change $(x, y)\mapsto (x+1, y)$, and get the assertion.
\end{proof}

\begin{proof}[\bf{Proof of Theorem 1(2)}]
Let $\hat{\mathcal{E}}_{m}$ be the smooth model of $\mathcal{E}_{m}$. From Lemma $\ref{lemma1}$, we have only to show that $\hat{\mathcal{E}}_{m}$ has the morphism which satisfies assertion (2). Put $X=\{ (\zeta :0:1)\mid \zeta \in K, \zeta^{q+1}=1\}$. We consider the subgroup
\[ G_{1} : =\{ (X:Y:Z)\mapsto (\zeta X:Y:Z)\mid \zeta \in K, \zeta^{q+1}=1\} \subset {\rm PGL}(3, K).\]
For any $\sigma \in G_{1}$, $\sigma$ induces an automorphism of $\hat{\mathcal{E}}_{m}$. Further, it is easily checked that $G_{1}$ acts on the set $X$ transitively. 

We consider the rational map
\[ \beta : \mathcal{E}_{m}\dashrightarrow \mathbb{P}^{2}\ ;\ \left( \frac{x+\lambda (x-1)}{1+\lambda (x-1)}:\frac{y}{(1+\lambda (x-1))^{s}}:1\right) ,\]
where $\lambda \in K\setminus \{ 0, 1\}$ satisfies $\lambda^{q}+\lambda =0$, and $s=(q+1)/m$. Then $\beta$ acts on $\mathcal{E}_{m}$. In fact,
\begin{align*}
&\left( \frac{y}{(1+\lambda (x-1))^{s}}\right)^{m}-\left( \frac{x+\lambda (x-1)}{1+\lambda (x-1)} \right)^{q+1} +1\\
\\
&=\frac{y^{m}-(x+\lambda (x-1))^{q+1}+(1+\lambda (x-1))^{q+1}}{(1+\lambda (x-1))^{q+1}}
\end{align*}
and the numerator is equal to
\begin{align*}
x^{q+1}&-1-(x^{q+1}+\lambda (x-1) x^{q}+\lambda^{q}x (x-1)^{q}+\lambda^{q+1}(x-1)^{q+1})\\
+(1&+\lambda (x-1)+\lambda^{q}(x-1)^{q}+\lambda^{q+1}(x-1)^{q+1})\\
=-&\lambda (x-1) x^{q}+\lambda x (x^{q}-1)+\lambda (x-1)-\lambda (x^{q}-1)=0.
\end{align*}
It is easily checked that the inverse of $\beta$ is given by
\[ \beta^{-1}: \mathcal{E}_{m}\dashrightarrow \mathcal{E}_{m}\ ;\ \left( \frac{x-\lambda (x-1)}{1-\lambda (x-1)}:\frac{y}{(1-\lambda (x-1))^{s}}:1\right) .\]
Note that $\beta$ induces the bijection of the set $X$.

Put $G_{2}=\beta G_{1}\beta^{-1}$. We show that $G_{1}\cap G_{2}$ is trivial. Let $P=(0:\omega :1)\in \mathcal{E}_{m}$ with $\omega^{m}+1=0$. Then $\beta (P)=(\lambda /(\lambda -1): \omega /(1-\lambda)^{s}:1)$, and $\beta (P)$ is fixed by all elements of $G_{2}$. On the other hand, we take $\sigma =(\zeta x: y:1)\in G_{1}$ with $\zeta^{q+1}=1$ and $\zeta \not= 1$. Then
\[ \sigma (\beta (P))=\left( \frac{\zeta \lambda}{\lambda-1}: \frac{\omega}{(1-\lambda )^{s}}:1\right) \not= \left( \frac{\lambda}{\lambda-1}:\frac{\omega}{(1-\lambda)^{s}}:1\right) =\beta (P).\]
Thus, $G_{1}\cap G_{2}$ is trivial. Moreover, there are isomorphisms $K(\hat{\mathcal{E}}_{m})^{G_{1}}=K(y)\simeq K(\mathbb{P}^{1})$, $K(\hat{\mathcal{E}}_{m})^{G_{2}}=K(\beta^{*} (y))\simeq K(\mathbb{P}^{1})$, and the equality
\[ \{ \sigma (1:0:1)\mid \sigma \in G_{1}\} =X=\{ \tau (1:0:1)\mid \tau \in G_{2}\} \]
holds. Using \cite[Theorem 1, Remark 1(2)]{birationalembedding}, we have assertion (2).
\end{proof}

\begin{proof}[\bf{Proof of Theorem 2}]
We show Theorem $\ref{theorem2}$ in the same way as the proof of Theorem $\ref{theorem1}$(2). We consider the subgroup
\[ G_{1} : =\{ (X:Y:Z)\mapsto (X:\zeta Y:Z)\mid \zeta^{q^{r}+1}=1\} \subset {\rm PGL}(3, K).\]
For any $\sigma \in G_{1}$, $\sigma$ induces an automorphism of $\hat{\mathcal{G}}_{r}$. Further, it is not difficult to check that $Q_{\infty}$ is fixed by all elements of $G_{1}$.

Take $b, c\in K\setminus \{ 0\}$ such that $b=(-1)^{r-1}b^{q^{2r}}$ and $c^{q}+c=b^{q^{r}+1}$. We consider the polynomial
\[ g_{b}(y) : =\sum_{i=0}^{r-1}(-1)^{i}b^{q^{i+r}}y^{q^{i}}\in K[y] .\]
Then the polynomial $g_{b}(y)$ has the following properties:
\begin{itemize}
\item[(1)] $g_{-b}(y)=-g_{b}(y)$.
\item[(2)] $g_{b}(-a)=-g_{b}(a)$ for all $a\in K$.
\item[(3)] $g_{b}(y+a)=g_{b}(y)+g_{b}(a)$ for all $a\in K$.
\item[(4)] $(g_{b}(y))^{q}+g_{b}(y)=by^{q^{r}}+b^{q^{r}}y$.
\end{itemize}
We consider the rational map
\[ \gamma : \mathcal{G}_{r}\dashrightarrow \mathbb{P}^{2}\ ;\ (x:y:1)\mapsto (g_{b}(y)+c+x : y+b :1).\]
Then $\gamma$ is a birational map from $\mathcal{G}_{r}$ to itself. In fact,
\begin{align*}
(y+b)^{q^{r}+1}&=y^{q^{r}+1}+by^{q^{r}}+b^{q^{r}}y+b^{q^{r}+1}\\
&=(x^{q}+x)+((g_{b}(y))^{q}+g_{b}(y))+(c^{q}+c)\\
&=(g_{b}(y)+c+x)^{q}+(g_{b}(y)+c+x),
\end{align*}
and the inverse of $\gamma$ is given by
\[ \gamma^{-1}: \mathcal{G}_{r}\dashrightarrow \mathcal{G}_{r}\ ;\ (x:y:1)\mapsto (-g_{b}(y)+c'+x : y-b :1),\]
where $c'=-c+g_{b}(b)$. Note that $\gamma$ fixes $Q_{\infty}$.

Put $G_{2}=\gamma G_{1}\gamma^{-1}$. We show that $G_{1}\cap G_{2}$ is trivial. Let $R=(\alpha : 0:1)$ with $\alpha^{q}+\alpha =0$. Then $\gamma (R)=(c+\alpha : b :1)$, and $\gamma (R)$ is fixed by all elements of $G_{2}$. We consider $\sigma =(x: \zeta y:1)\in G_{1}$, where $\zeta \in K\setminus \{ 1\}$ and $\zeta^{q^{r}+1}=1$. Then, it follows that
\[ \sigma (\gamma (R))=(c+\alpha : \zeta b:1)\not= (c+\alpha : b :1)=\gamma (R) .\]
Thus, $G_{1}\cap G_{2}$ is trivial. Furthermore, $K(\hat{\mathcal{G}}_{r})^{G_{i}}\simeq K(\mathbb{P}^{1})$ for $i=1, 2$, and the equality
\[ \sum_{\sigma \in G_{1}}\sigma (Q_{\infty})=(q^{r}+1) Q_{\infty}=\sum_{\tau \in G_{2}}\tau (Q_{\infty})\]
holds as divisors. Applying \cite[Theorem 1 and Remark 1(2)]{birationalembedding}, we get Theorem 2.
\end{proof}

\end{document}